\documentclass{article}

\usepackage[utf8]{inputenc}

\usepackage{parskip, amsfonts, amsmath, changepage, amsthm, setspace, geometry, verbatim, MnSymbol,chemarrow}

\usepackage[pdftex]{graphicx}
\graphicspath{ {./images/} }

\usepackage[T1]{fontenc}

\usepackage{authblk}


\usepackage{MnSymbol, wasysym, mathtools, xr}

\externaldocument[general-]{chromatic-numbers-Cayley-graphs-general-theory}

\externaldocument[small-]{chromatic-numbers-Cayley-graphs-small-matrices}


\geometry{letterpaper, margin=1in}

\doublespacing

\usepackage[tiny]{titlesec}

\usepackage{hyperref}



\newcommand*\tp[1]{\big( \begin{smallmatrix}#1\end{smallmatrix} \big)}

\newcommand*\ab[1]{\left\langle #1 \right\rangle}

\newcommand*\Mod[1]{ \; (\textup{mod} \; #1 )}

\newcommand*\tops[2]{\texorpdfstring{#1}{#2}}

\newcommand*\ol[1]{\overline{#1}}

\newcommand*{\Z}{\mathbb{Z}}
\newcommand*{\N}{\mathbb{N}}
\newcommand*{\R}{\mathbb{R}}
\newcommand*{\Q}{\mathbb{Q}}

\newcommand*\Cay{\textup{Cay}}

\renewcommand{\phi}{\varphi}

\newcommand*\Sec[1]{*{#1} \phantomsection \addcontentsline{toc}{section}{#1}}


\newtheorem{Thm}{Theorem}[section]
\newtheorem{Prop}[Thm]{Proposition}
\newtheorem{Lem}[Thm]{Lemma}

\theoremstyle{definition}

\theoremstyle{remark}

\providecommand{\keywords}[1]
{
  \small	
  \textbf{\textit{Keywords---}} #1
}

\title{A generalization of Zhu’s theorem on six-valent integer distance graphs}
\author[a]{Jonathan Cervantes}
\author[b]{Mike Krebs}
\affil[a]{University of California, Riverside, Dept. of Mathematics, Skye Hall, 900 University Ave., Riverside, CA 92521, jcerv092@ucr.edu}
\affil[b]{California State University, Los Angeles, Dept. of Mathematics, 5151 State University Drive, Los Angeles, CA 91711, mkrebs@calstatela.edu}
\date{\today}

\begin{document}

\maketitle

\keywords{graph, chromatic number, abelian group, Cayley graph, integer distance graph, cube-like graph, Zhu's theorem}

\begin{abstract}

Given a set $S$ of positive integers, the integer distance graph for $S$ has the set of integers as its vertex set, where two vertices are adjacent if and only if the absolute value of their difference lies in $S$.  In 2002, Zhu completely determined the chromatic number of integer distance graphs when $S$ has cardinality $3$.  Integer distance graphs can be defined equivalently as Cayley graphs on the group of integers under addition.  In a previous paper, the authors develop general methods to approach the problem of finding chromatic numbers of Cayley graphs on abelian groups.  To each such graph one associates an integer matrix.  In some cases the chromatic number can be determined directly from the matrix entries.  In particular, the authors completely determine the chromatic number whenever the matrix is of size $3\times 2$ --- precisely the size of the matrices associated to the graphs studied by Zhu.  In this paper, then, we demonstrate that Zhu's theorem can be recovered as a special case of the authors' previous results.
\end{abstract}

\section{Introduction}


An \emph{integer distance graph} is a Cayley graph on the group $\mathbb{Z}$ of integers.  In other words, given a set $S$ of positive integers, we form the graph whose vertex set is $\mathbb{Z}$ such that two vertices $x$ and $y$ are adjacent if and only if $|x-y|\in S$.  (We remark that such graphs are sometimes referred to simply as ``distance graphs'' in the literature, but as the term ``distance graph'' can refer more generally to a graph whose vertex set is a metric space with an edge between each pair of points whose distance lies in some fixed set, to avoid ambiguity we use here the term ``integer distance graph.")

Chromatic numbers of integer distance graphs have been widely investigated.  We refer the reader to \cite{Liu} for a survey of this subject and an extensive list of references.  In particular Zhu, in \cite{Zhu}, determines the chromatic number of all integer distance graphs of the form $\text{Cay}(\mathbb{Z},\{\pm a, \pm b, \pm c\})$.  Moreover, Eggleton, Erd\H{o}s, and Skilton in \cite{EES} prove that if an integer distance graph of finite degree admits a proper $k$-coloring, then it admits a periodic proper $k$-coloring.  They obtain an upper bound on the period but point out that it is quite large and very likely can be reduced considerably.

In \cite{Cervantes-Krebs-general}, the authors develop a general method for dealing with chromatic numbers of Cayley graphs of abelian groups.  In \cite{Cervantes-Krebs-small-cases} this method is summarized as follows: ``A connected Cayley graph on an abelian group with a finite generating set $S$ can be represented by its Heuberger matrix, i.e., an integer matrix whose columns generate the group of relations between members of $S$.  In a companion article, the authors lay the foundation for the use of Heuberger matrices to study chromatic numbers of abelian Cayley graphs.''  The article \cite{Cervantes-Krebs-small-cases} goes on to describe its main results: ``We call the number of rows in the Heuberger matrix the {\it dimension}, and the number of columns the {\it rank}.  In this paper,  we give precise numerical conditions that completely determine the chromatic number in all cases with dimension $1$; with rank $1$; and with dimension $\leq 3$ and rank $\leq 2$.''  Example 2.4 in \cite{Cervantes-Krebs-general} gives a general formula for a Heuberger matrix associated to an integer distance graph.  When the set $S$ of positive integers has cardinality $m$, this matrix has size $m\times (m-1)$.  The integer distance graphs in \cite{Zhu} have $|S|=3$ and thus have Heuberger matrices of dimension $3$ and rank $2$.  Hence one ought to be able to recover Zhu's theorem from the results of \cite{Cervantes-Krebs-small-cases}.

The purpose of the present paper is to do just that.  We briefly discuss the method of proof.  We begin with an integer distance graph formed by a set $S=\{a,b,c\}$ with $0<a<b<c$.  We then take the matrix of \cite[Example 2.4]{Cervantes-Krebs-general} as a starting point.  The results of \cite{Cervantes-Krebs-small-cases} require a matrix in a particular form.  In \cite{Cervantes-Krebs-general} several matrix transformations are detailed which preserve the underlying graph.  Via these transformations we morph the starting matrix into the needed form.  The next section provides full details.

Moreover, a close examination of the method of proof yields significantly improved upper bounds for the periods of optimal colorings of such graphs.

This article depends heavily on \cite{Cervantes-Krebs-general} and \cite{Cervantes-Krebs-small-cases}, which we will refer to frequently.  The reader should assume that all notation, terminology, and theorems used but not explained here are explained there.

\section{Zhu's theorem}\label{section-Zhus-theorem}


For positive integers $a,b,c$ with $\gcd(a,b,c)=1$, we define the \emph{Zhu $\{a,b,c\}$ graph} as $\text{Cay}(\mathbb{Z},\{\pm a, \pm b,\pm c\})$.  

\begin{Thm}[{\cite[Cor. 2.1]{Zhu}}]\label{theorem-Zhu}Let $X$ be a Zhu $\{a,b,c\}$ graph with $a\leq b\leq c$.  Then \[\chi(X)=\begin{cases}
2 & \text{if }a,b,c\text{ are all odd}\\
4 & \text{if }a=1, b=2,\text{ and }3|c\\
4 & \text{if }a+b=c,\text{ and }a\not\equiv b\Mod{3}\\
3 & \text{otherwise}.\end{cases}\]\end{Thm}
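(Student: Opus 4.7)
The plan is to start with a Heuberger matrix $M_0$ for the Zhu $\{a,b,c\}$ graph as provided by Example 2.4 of \cite{Cervantes-Krebs-general}: since $|S|=3$, this matrix has dimension $3$ and rank $2$. I would then apply the graph-preserving matrix transformations developed in \cite{Cervantes-Krebs-general} (unimodular column operations, row permutations, and sign changes of rows/columns) to reduce $M_0$ to a matrix $M$ in the normal form required by the classification results of \cite{Cervantes-Krebs-small-cases} for dimension $3$ and rank $2$. Once $M$ is in normal form, the chromatic number of the underlying graph can be read off directly from numerical conditions on the entries of $M$.

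The heart of the proof is then a case analysis matching those entry-wise numerical conditions against the four conditions in Theorem \ref{theorem-Zhu}. The easy case is when $a,b,c$ are all odd: here $\Cay(\Z,\{\pm a,\pm b,\pm c\})$ is bipartite, since every edge flips parity, so $\chi=2$. In the remaining cases at least one of $a,b,c$ is even, and I must distinguish $\chi=3$ from $\chi=4$. The goal is to show that the transformations from $M_0$ to $M$ convert the arithmetic conditions ``$a=1$, $b=2$, and $3\mid c$'' and ``$a+b=c$ with $a\not\equiv b\Mod{3}$'' into precisely the entry-wise conditions in \cite{Cervantes-Krebs-small-cases} that force $\chi=4$. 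For all remaining configurations containing at least one even value, those entry-wise conditions must fail, so the small-cases theorem yields $\chi=3$.

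The main obstacle I anticipate is the translation step: the criteria in \cite{Cervantes-Krebs-small-cases} are phrased in terms of particular matrix entries (divisibility relations and congruences modulo small numbers), while Zhu's criteria are phrased in terms of the original integers $a,b,c$. Bridging the two requires carefully tracking the entries of $M$ through each column operation and row move, and will split into several subcases depending on which of $a,b,c$ is even, on the pairwise gcds, and on whether $a+b=c$. The bookkeeping will be most delicate in establishing the equivalence between ``$a+b=c$ and $a\not\equiv b\Mod{3}$'' and the relevant entry-wise condition in the small-cases theorem, since the sum relation $a+b=c$ makes one of the columns of $M_0$ redundant in a particular way that produces a qualitatively distinct normal form.

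Finally, to obtain the improved upper bounds on periods of optimal colorings mentioned in the introduction, I would propagate the periods of the colorings produced by the small-cases theorem backward through the chain of matrix transformations to the original Zhu graph. This is essentially computational bookkeeping rather than a conceptual obstacle, but it yields a sharper bound on the period of an optimal coloring than the bound appearing in \cite{EES}.
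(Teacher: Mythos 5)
Your plan is essentially the paper's own proof: start from the Heuberger matrix of \cite[Example 2.4]{Cervantes-Krebs-general}, dispose of the bipartite (all-odd) case via the parity/column-sum criterion, transform the matrix into the modified Hermite normal form required by the dimension-$3$, rank-$2$ classification of \cite{Cervantes-Krebs-small-cases}, and match the resulting entry-wise conditions against Zhu's arithmetic conditions, with the period bound obtained by tracing the colorings back through the row-collapsing homomorphisms. The one concrete device you will need when executing the normal-form reduction, which the paper supplies up front, is to permute and negate the generators so that $3\mid a_1+a_2$ (always possible since among any three integers two have sum or difference divisible by $3$); this is what makes the six normal-form types of the small-cases theorem line up with the mod-$3$ conditions in Zhu's statement.
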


Note that \cite{Zhu} requires $a$, $b$, and $c$ to be distinct, but we do not.  By \cite[Example \ref{general-example-ees}]{Cervantes-Krebs-general}, Theorem \ref{theorem-Zhu} holds even when two of the integers $a,b,c$ are equal.

The most difficult part of the proof of Theorem \ref{theorem-Zhu} is showing that the upper bound of $3$ for $\chi(X)$ holds in every ``otherwise'' case.  In this section, we furnish an alternate proof of Zhu's theorem in which those $3$-colorings arise by pulling back from Heuberger circulants.  As we show, we can think about the accompanying graph homomorphisms either in terms of Heuberger matrices or more directly as reduction modulo a carefully chosen integer.

We now sketch a proof of Zhu's theorem using \cite[Theorem \ref{small-theorem-m-equals-3}]{Cervantes-Krebs-small-cases}.   Let $a_1,a_2,a_3$ be nonzero integers with $\gcd(a_1,a_2,a_3)=1$, and let $X$ be the Zhu $\{|a_1|,|a_2|,|a_3|\}$ graph.  It is straightforward to show that given any three integers, there are two of them such that either their sum or their difference is divisible by $3$.  The set $\{\pm a_1, \pm a_2, \pm a_3\}$ is unchanged if we either permute $a_1,a_2,a_3$ or else replace $a_j$ with $-a_j$.  Consequently we may assume without loss of generality that $3\mid a_1+a_2$.  Moreover, by transposing $a_1$ and $a_2$, and/or replacing $a_1$ and $a_2$ with their negatives, we may assume that $-a_1\leq a_2$ and $|a_1|\leq |a_2|$.  The purpose of these maneuvers is to find a Heuberger matrix for $X$ in modified Hermite normal form.

Let $g_2=\text{gcd}(a_1,a_2)$.  Let $u_{12}, u_{22}\in\mathbb{Z}$ such that \begin{equation}\label{equation-u12-u22}
    a_1u_{12}+a_2u_{22}=a_3g_2
\end{equation}

Recall from \cite[Example \ref{general-example-arbitrary-distance-graph}]{Cervantes-Krebs-general} that $X$ is isomorphic to \[\begin{pmatrix}
\frac{a_2}{g_2} & -u_{12} \\
-\frac{a_1}{g_2} & -u_{22} \\
0 & g_2\\
\end{pmatrix}^{\text{SACG}}\cong M^{\text{SACG}}\;\;\text{ where }\;\;M=\begin{pmatrix}
g_2 & 0\\
-u_{22} & -\frac{a_1}{g_2}\\
-u_{12} & \frac{a_2}{g_2}\\
\end{pmatrix}.\]

Observe that $M$ has no zero rows.  Moreover, note that $X$ does not have loops.  It is straightforward to show that the column sums of $M$ are both even if and only if $a_1, a_2$, and $a_3$ are all odd, so by \cite[Lemma \ref{general-lemma-bipartite}]{Cervantes-Krebs-general} we have that $\chi(X)=2$ if and only if $a_1, a_2$, and $a_3$ are all odd.  Assume now that $\chi(X)\neq 2$.  Thus by \cite[Lemma \ref{small-lemma-modified-Hermite-normal-form}]{Cervantes-Krebs-small-cases} and \cite[Theorem \ref{small-theorem-m-equals-3}]{Cervantes-Krebs-small-cases} we have that $\chi(X)$ equals either $3$ or $4$.     Take $a,b,c$ so that $0<a\leq b\leq c$ and $\{a,b,c\}=\{|a_1|,|a_2|,|a_3|\}$.  It remains to show that $\chi(X)=4$ if and only if either (i) $a=1, b=2,\text{ and }3|c$, or else (ii) $a+b=c,\text{ and }a\not\equiv b\Mod{3}$.

If (i) holds, then by \cite[Example \ref{general-example-arbitrary-distance-graph}]{Cervantes-Krebs-general} we have that $X$ is isomorphic to  \(\begin{pmatrix}
1 & 0\\
0 & -1\\
3(c/3) & 2
\end{pmatrix}^{\text{SACG}}\), whence we have $\chi(X)=4$ by \cite[Theorem \ref{small-theorem-m-equals-3}]{Cervantes-Krebs-small-cases}.

If (ii) holds, then either $a$ or $b$ or $c$ must be divisible by $3$.  If, say, $3\mid b$, then by \cite[Example \ref{general-example-arbitrary-distance-graph}]{Cervantes-Krebs-general} we have that $X$ is isomorphic to  \(\begin{pmatrix}
1 & 0\\
-1 & a\\
-1 & a+3(k-1)
\end{pmatrix}^{\text{SACG}}\) with $k=(b+3)/3$, whence we have $\chi(X)=4$ by \cite[Theorem \ref{small-theorem-m-equals-3}]{Cervantes-Krebs-small-cases}.  Similar arguments give us $\chi(X)=4$ when $3\mid a$ or $3\mid c$.

Conversely, suppose that $\chi(X)=4$, and we will show that either (i) or (ii) is satisfied.  To apply \cite[Theorem \ref{small-theorem-m-equals-3}]{Cervantes-Krebs-small-cases}, we must first put $M$ in modified Hermite normal form.  The matrix $M$ satisfies all conditions of \cite[Def. \ref{small-def-modified-Hermite-normal-form}]{Cervantes-Krebs-small-cases} except the last; this can be rectified with help from the division theorem.  Let $q$ and $r$ be integers such that\begin{equation}\label{equation-q-r}
    -u_{22}=q\left(-\frac{a_1}{g_2}\right)+r, \text{ where }-\left|\frac{a_1}{g_2}\right|<r\leq 0.
\end{equation}We assume now that $-\left|\frac{a_1}{2g_2}\right|\leq r$, and we leave to the reader the other, similar case where this inequality does not hold.  Adding $-q$ times the second column of $M$ to the first, we obtain the matrix \[M_1=\begin{pmatrix}
g_2 & 0\\
r & -\frac{a_1}{g_2}\\
-u_{12}-\frac{q a_2}{g_2} & \frac{a_2}{g_2}
\end{pmatrix}.\]We have that $X$ is isomorphic to $M^{\text{SACG}}_1$ and that $M_1$ is in modified Hermite normal form.  Thus $M_1$ equals one of the six types of matrices listed in the third statement in \cite[Theorem \ref{small-theorem-m-equals-3}]{Cervantes-Krebs-small-cases}.  We discuss here only the case where \[M_1=\begin{pmatrix}
1 & 0\\
0 & 1\\
3k & 1+3k
\end{pmatrix}\]for some positive integer $k$, and leave the other five cases for the reader.  In this case we have $g_2=1$, $r=0$, $a_1=-1$, $a_2=1+3k$, and $-u_{12}-q a_2=3k$.  From (\ref{equation-q-r}) we get that $u_{22}=-q$.  So by (\ref{equation-u12-u22}) we get that $a_3=3k$.  From this we see that $a=1$ and $b=3k$ and $c=1+3k$, so condition (ii) is met.\hfill$\square$



\vspace{.1in}

We have natural graph homomorphisms from Zhu graphs to Heuberger circulants given by reducing modulo an appropriate integer.  The next lemma recasts these homomorphisms in terms of Heuberger matrices associated to the corresponding standardized abelian Cayley graphs.  
\begin{Lem}\label{lemma-homomorphism-Zhu-to-Heuberger}Let $a,b,c$ be positive integers such that $\gcd(a,b,c)=1$ and $b+c\nmid a$.  Then $C_{b+c}(a,b)$ is a Heuberger circulant graph.  Moreover, let $X$ and $Y$ be standardized Cayley graphs defined by \[\begin{pmatrix}
y_{11} & y_{12}\\
y_{21} & y_{22}\\
y_{31} & y_{32}
\end{pmatrix}^{\text{SACG}}_X\text{ and }\begin{pmatrix}
y_{11} & y_{12}\\
y_{21}-y_{31} & y_{22}-y_{32}
\end{pmatrix}^{\text{SACG}}_Y.\]  Suppose we have an isomorphism between the Zhu $\{a,b,c\}$ graph and $X$ given by the map $\phi_X\colon \mathbb{Z}^3\to\mathbb{Z}$ defined by $\phi_X\colon e_1\mapsto a, e_2\mapsto b, e_3\mapsto c$.  Then $\phi_Y\colon \mathbb{Z}^2\to\mathbb{Z}_{b+c}$ defined by $\phi_Y\colon e_1\mapsto a, e_2\mapsto b$ gives us an isomorphism between $Y$ and $C_{b+c}(a,b)$.  Furthermore, the following diagram of graph homomorphisms commutes, where $\tau_1$ is defined by $e_1\mapsto e_1, e_2\mapsto e_2, e_3\mapsto -e_2$, and $\tau_2$ is defined by reduction modulo $b+c$.\[\begin{array}{ccl}
\begin{pmatrix}
y_{11} & y_{12}\\
y_{21} & y_{22}\\
y_{31} & y_{32}
\end{pmatrix}^{\text{SACG}}_X & \xrightarrow[\tau_1]{\ocirc} & \begin{pmatrix}
y_{11} & y_{12}\\
y_{21}-y_{31} & y_{22}-y_{32}
\end{pmatrix}^{\text{SACG}}_Y\\
\phi_X\Bigg\downarrow\qquad & \; & \qquad\Bigg\downarrow\phi_Y\\
\text{Cay}(\mathbb{Z},\{\pm a,\pm b,\pm c\}) & \xrightarrow[\tau_2]{\;} & C_{b+c}(a,b)
\end{array}\]\end{Lem}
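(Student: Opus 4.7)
\textit{Proof plan.} I would verify the three assertions of the lemma in turn.

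First, for $C_{b+c}(a,b)$ to qualify as a Heuberger circulant, the defining conditions should follow directly from the hypotheses: $b+c\nmid a$ makes $a$ nontrivial modulo $b+c$, $b$ is nontrivial since $0<b<b+c$, and $\gcd(a,b,b+c)=\gcd(a,b,c)=1$ gives the required coprimality of the generators with the modulus.

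Second, I would show that $\phi_Y$ induces a bijection onto $\mathbb{Z}_{b+c}$. The $j$-th column of $Y$'s defining matrix, $(y_{1j},y_{2j}-y_{3j})^T$, is sent by $\phi_Y$ to $y_{1j}a+(y_{2j}-y_{3j})b$. Since $\phi_X$ is assumed to induce an isomorphism, the original column $(y_{1j},y_{2j},y_{3j})$ of the defining matrix of $X$ is an integer relation among $a,b,c$, so $y_{1j}a+y_{2j}b+y_{3j}c=0$; reducing modulo $b+c$ and using $c\equiv-b$ yields $y_{1j}a+(y_{2j}-y_{3j})b\equiv 0\pmod{b+c}$. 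Thus $\phi_Y$ vanishes on the column span and descends to the quotient. Surjectivity follows from $\gcd(a,b,b+c)=1$, and injectivity from comparing orders: the quotient has cardinality $|\det Y|$, and expanding
\[\det Y=(y_{11}y_{22}-y_{12}y_{21})-(y_{11}y_{32}-y_{12}y_{31})\]
recognizes the two bracketed terms as $2\times 2$ minors of the matrix of $X$. Because that matrix has cokernel $\mathbb{Z}$ with $\phi_X$ sending standard generators to $a,b,c$, the minors deleting row $2$ and row $3$ evaluate (up to a common global sign) to $-b$ and $c$ respectively, giving $|\det Y|=b+c$. Since the induced map sends standard generators to standard generators, this bijection of vertex sets upgrades to a graph isomorphism.

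Finally, for commutativity of the diagram I would evaluate both composites on the standard basis of $\mathbb{Z}^3$: the composite $\tau_2\circ\phi_X$ sends $e_1,e_2,e_3$ to $a,b,c$, the last reducing to $-b\pmod{b+c}$, while $\phi_Y\circ\tau_1$ sends them directly to $a,b,-b$. Agreement of the two group homomorphisms on a generating set implies agreement everywhere.

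The main obstacle is the signed minor computation in step two. The pivotal fact is that whenever a $3\times 2$ integer matrix has cokernel $\mathbb{Z}$, its three $2\times 2$ minors, taken with the standard alternating signs, are precisely the values (up to one global sign) to which the induced surjection sends the standard basis vectors; once this is in hand, expressing $\det Y$ as a difference of two such minors yields $\pm(b+c)$ on the nose.
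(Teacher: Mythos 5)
Your proof is correct and follows the same three-part structure as the paper's: verifying that the hypotheses give the Heuberger circulant conditions, showing $\phi_Y$ induces an isomorphism by identifying its kernel with the column span of $Y$'s defining matrix, and checking commutativity of the diagram on the standard basis vectors. The only difference is that where the paper dismisses the kernel identification as a ``routine exercise,'' you carry it out explicitly via the signed-minor computation giving $|\det Y|=b+c$; this is a valid way to fill in that step, and your ``pivotal fact'' about the $2\times 2$ minors of a $3\times 2$ presentation matrix with cokernel $\mathbb{Z}$ is indeed the standard Smith-normal-form/cross-product argument.
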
\begin{proof}The conditions $\gcd(a,b,c)=1$ and $b+c\nmid a$ guarantee that $C_{b+c}(a,b)$ meets the criteria of \cite[Def. \ref{small-def-Heuberger-circulant}]{Cervantes-Krebs-small-cases}.

Let $M_X$ and $M_Y$, respectively, be the above matrices defining the graphs $X$ and $Y$.  Using the fact that the kernel of $\phi_X$ equals the $\mathbb{Z}$-span of the columns of $M_X$, it is then a routine exercise to show that the kernel of $\phi_Y$ equals the $\mathbb{Z}$-span of the columns of $M_Y$, whence it follows that $\phi_Y$ is an isomorphism.

Finally, we have that $\tau_2\circ\phi_X(e_j)=\phi_Y\circ\tau_1(e_j)$ for $j=1,2,3$; hence the diagram is commutative.\end{proof}

In a nutshell: To reduce the Zhu $\{a,b,c\}$ graph modulo $b+c$, we subtract the third row from the second row to obtain the Heuberger circulant $C_{b+c}(a,b)$.  Indeed, the proof of Lemma \ref{lemma-homomorphism-Zhu-to-Heuberger} generalizes in a similar fashion to any number of variables.

Of course, we can play the same game with any pair of variables in lieu of $b$ and $c$.  Moreoever, we can reduce by $b-c$ instead of $b+c$ by adding the two rows instead of subtracting them.


In \cite{EES} a \emph{periodic $k$-coloring} of an integer distance graph $X$ with \emph{period} $p$ is a $k$-coloring $c$ of $X$ such that $c(n)=c(n+p)$ for all $n\in\mathbb{Z}$.  Equivalently, a  periodic $k$-coloring of an integer distance graph $X$ with period $p$ is a pullback, via the map $n\mapsto\overline{n}$, of a $k$-coloring of a circulant graph of order $p$.  It is proved in \cite{EES} that if an integer distance graph $\text{Cay}(\mathbb{Z},S)$, where $S$ is finite, has chromatic number $\chi$, then it has a proper periodic $\chi$-coloring.  That article provides what the authors describe as an ``explicit (but weak)'' upper bound of $qk^q$ for the smallest period for such colorings, where $q=\max S$.

We now show that for a Zhu $\{a,b,c\}$ graph, we can indeed improve this upper bound considerably.  For we have just shown that Theorem \ref{theorem-Zhu} follows from \cite[Theorem \ref{small-theorem-m-equals-3}]{Cervantes-Krebs-small-cases}.  In the proof of \cite[Theorem \ref{small-theorem-m-equals-3}]{Cervantes-Krebs-small-cases}, all colorings are constructed via homomorphisms obtained by collapsing two rows by adding or subtracting them.  By Lemma \ref{lemma-homomorphism-Zhu-to-Heuberger}, such a homomorphism corresponds to reduction modulo the sum or difference of two of $a,b,c$.  Thus we have the following proposition.

\begin{Prop}Let $0<a\leq b\leq c$ be integers.  Suppose the Zhu $\{a,b,c\}$ graph has chromatic number $\chi$.  Then it admits a periodic $\chi$-coloring with period $\leq b+c$.\end{Prop}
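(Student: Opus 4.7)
The plan is to split into cases based on $\chi \in \{2, 3, 4\}$ as determined by Theorem \ref{theorem-Zhu}.  If $\chi = 2$, then the integers $a$, $b$, and $c$ are all odd, and reduction modulo $2$ yields a proper periodic $2$-coloring with period $2 \leq b + c$ (the last inequality holding since $b, c \geq 1$).

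If $\chi \in \{3, 4\}$, I would invoke the argument already sketched immediately before the proposition.  The alternate proof of Zhu's theorem represents $X$ as $M_1^{\text{SACG}}$ for a $3 \times 2$ Heuberger matrix $M_1$ in modified Hermite normal form, and then appeals to \cite[Theorem \ref{small-theorem-m-equals-3}]{Cervantes-Krebs-small-cases}.  The proof of that theorem produces optimal colorings of $M_1^{\text{SACG}}$ by pulling back colorings from Heuberger circulants through row-collapsing homomorphisms of the sort denoted $\tau_1$ in Lemma \ref{lemma-homomorphism-Zhu-to-Heuberger}.  Generalizing Lemma \ref{lemma-homomorphism-Zhu-to-Heuberger} to any pair of rows --- a routine extension noted in the paragraph immediately following that lemma --- each such row-collapse on $M_1^{\text{SACG}}$ corresponds, under $\phi_X$, to reducing $\text{Cay}(\mathbb{Z}, \{\pm a, \pm b, \pm c\})$ modulo $|x + y|$ or $|x - y|$ for some $x, y \in \{a, b, c\}$.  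Since $a \leq b \leq c$, every such modulus is at most $b + c$, yielding a proper periodic $\chi$-coloring of period $\leq b + c$.

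The main thing to verify is that every coloring construction appearing in the proof of \cite[Theorem \ref{small-theorem-m-equals-3}]{Cervantes-Krebs-small-cases} really does fit this row-collapsing template, i.e., that no case requires a more exotic homomorphism.  This is precisely the assertion in the paragraph preceding the proposition, so in practice one confirms it by running through the six matrix types listed in that theorem and checking each.  Assuming that inspection goes through, the proposition follows immediately by combining the bound $|x \pm y| \leq b + c$ with the translation from matrix-level homomorphisms to reductions on $\mathbb{Z}$ afforded by Lemma \ref{lemma-homomorphism-Zhu-to-Heuberger}.
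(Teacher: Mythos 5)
Your proposal matches the paper's own argument: the paper likewise deduces the proposition from the observation that every coloring constructed in the proof of the cited theorem arises by collapsing two rows (addition or subtraction), which by Lemma \ref{lemma-homomorphism-Zhu-to-Heuberger} and its noted generalization corresponds to reduction modulo $|x\pm y|\leq b+c$ for some $x,y\in\{a,b,c\}$. Your separate treatment of the bipartite case via reduction modulo $2$ is a harmless refinement, and the verification you flag --- that no case of the cited theorem uses a more exotic homomorphism --- is exactly the assertion the paper itself relies on.
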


\section*{Acknowledgments}

The authors wish to thank Daphne Liu for her insights into the history of Zhu's theorem.

\bibliographystyle{amsplain}
\bibliography{references}



\end{document}